\documentclass[letterpaper]{article}
\usepackage{geometry}
\geometry{left=2.3cm,right=2.3cm,top=2.5cm,bottom=4cm}
\usepackage{amsmath}
\usepackage{verbatim}
\usepackage{graphicx}
\usepackage{indentfirst}
\linespread{1.5}
\setlength{\parindent}{2em}
\usepackage{xcolor}
\usepackage{cite}
\usepackage{url}
\usepackage{verbatim}
\usepackage{CJK}
\usepackage{graphicx}
\usepackage{algorithm}
\usepackage{algorithmic}
\usepackage{tikz}
\usepackage{multirow}
\title{\textbf{\textbf{Report}}}
\author{Ren Liu, Xiaoqun Zhang}

\newtheorem{proposition}{Proposition}
\newtheorem{proof}{Proof}
\date{}
\begin{document}

\maketitle

\begin{abstract}%
Neural network has attracted great attention for a long time and many researchers are devoted  to improve the effectiveness of  neural network training algorithms. Though stochastic gradient descent (SGD) and other explicit gradient-based methods are widely adopted, there are still many challenges such as gradient vanishing and small step sizes, which leads to slow convergence and instability of SGD algorithms. Motivated by error back propagation (BP)  and proximal methods, we propose a semi-implicit back propagation method for neural network training. Similar to BP, the difference on the neurons are propagated in a backward fashion and the parameters are updated with proximal mapping. The implicit update for both hidden neurons and parameters allows to choose large step size in the training algorithm. Finally, we also show that any fixed point of  convergent sequences produced by this algorithm is a stationary point of the objective loss function. The experiments on both MNIST and CIFAR-10 demonstrate that the proposed semi-implicit BP algorithm leads to  better performance in terms of both loss decreasing and training/validation accuracy, compared to SGD and a similar algorithm ProxBP.
\end{abstract}

\begin{keywords}
  Back Propagation, Neural Network, Optimization, Implicit Method
\end{keywords}

\maketitle

\section{Introduction}
Along with the rapid development of computer hardware, neural network methods have achieved enormous success in divers application fields, such as computer vision \cite{Krizhevsky2012ImageNet}, speech recognition \cite{Hinton2012Deep,Sainath2013Deep}, nature language process \cite{Collobert2011Natural} and so on. The key ingredient of neuron network methods amounts to solve a highly non-convex optimization problem. The most basic and popular algorithm is stochastic gradient descent (SGD) \cite{Robbins1951A},  especially in the  form of "error" back propagation (BP) \cite{Rumelhart1986Learning} that leads to high efficiency for training deep neural networks.  Since then many variants of gradient based methods have been proposed, such as  Adagrad\cite{Duchi2011Adaptive}, Nesterov momentum \cite{Sutskever2013On}, Adam \cite{Kingma2014Adam} and AMSGrad \cite{Reddi2019On}. Recently  extensive research are also dedicated to develop  second-order algorithms, for example Newton method \cite{orr2003neural} and L-BFGS \cite{le2011optimization}.\\
It is well known that the convergence of explicit gradient descent type approaches require sufficiently small step size. For example, for a loss function with Lipschitz continuous gradient, the stepsize should be in the range of $(0,2/ \mathcal{L} )$ for $\mathcal{L}$ being the Lipschitz constant, which is in general extremely big for real datasets. Another difficulties in gradient descent approaches is to propagate the "error" deeply due to nonlinear activation functions, which is commonly known as gradient vanishing. To overcome these problems,  implicit updates are more attractive. In  \cite{Frerix2017Proximal},  proximal back propagation, namely ProxBP, was proposed to utilize the proximal method for the weight updating. Alternative approach is to reformulate the training problem  as a sequence of constrained optimization by introducing the constraints on weights and hidden neurons at each layer. Block coordinate descent methods  \cite{carreira2014distributed,zhang2017convergent} were proposed and analyzed to solve this constrained formulation with  square loss functions. Along this line, the Alternating direction method of multipliers (ADMM)  \cite{taylor2016training,zhang2016efficient} were also proposed with extra dual variables updating.\\
Motivated by proposing implicit weight updates to overcome small step sizes and vanishing gradient problems in SGD, we propose a semi-implicit scheme, which has similar form as "error" back propagation through neurons, while the parameters are updated through optimization at each layer. It can be shown that any fixed point of the sequence generated by the scheme is a stationary point of the objective loss function.  In contrast to explicit gradient descent methods,  the proposed method allows to choose large step sizes and leads to a better training performance per epoch. The performance is also stable with respect to the choice of stepsizes.  Compared to the implicit method ProxBP, the proposed scheme only updates the neurons after the activation and the error is updated in a more implicit way, for which better training and validation performances are achieved in the experiments on both MNIST and CIFAR-10.
\section{Notations}
\label{gen_inst}
Given input-output  data pairs $(X,Y)$, we consider a $N$-layer feed-forward fullly connected neural network as shown in Figure \ref{fig:nn}.
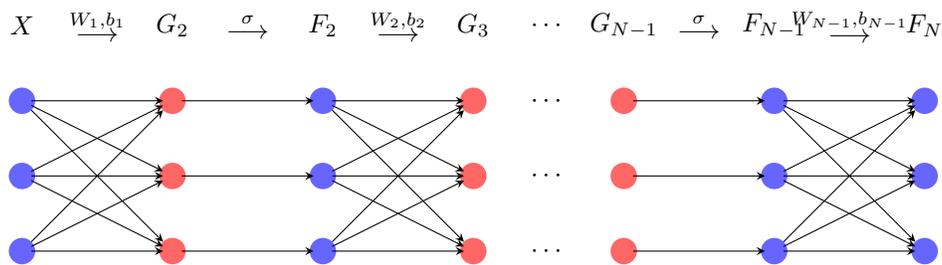
\begin{figure}[h]
\center
\begin{tikzpicture}[every node/.style={align=center}]
	\foreach \x in{1,2,3}
	\fill[red!60](0,\x)circle(5pt)node(b\x){};
	\foreach \x in{1,2,3}
	\fill[blue!60](-2,\x)circle(5pt)node(a\x){};
	\foreach \x in{1,2,3}
    \fill[blue!60](2,\x)circle(5pt)node(c\x){};
    \foreach \x in{1,2,3}
	\fill[red!60](4,\x)circle(5pt)node(d\x){};
	\foreach \x in{1,2,3}
	\fill[red!60](6,\x)circle(5pt)node(e\x){};
	\foreach \x in{1,2,3}
    \fill[blue!60](8,\x)circle(5pt)node(f\x){};
    \foreach \x in{1,2,3}
    \fill[blue!60](10,\x)circle(5pt)node(g\x){};
	\node at(-2,4){$X$};
	\node at(-1,4){$\stackrel{W_1, b_1}{\longrightarrow}$};
	\node at(0,4){$G_2$};
	\node at(1,4){$\stackrel{\sigma}{\longrightarrow}$};
	\node at(2,4){$F_2$};
	\node at(3,4){$\stackrel{W_2, b_2}{\longrightarrow}$};
	\node at(4,4){$G_3$};
	\node at(5,4){$\cdots$};
	\node at(6,4){$G_{N-1}$};
	\node at(7,4){$\stackrel{\sigma}{\longrightarrow}$};
	\node at(8,4){$F_{N-1}$};
	\node at(9,4){$\stackrel{W_{N-1}, b_{N-1}}{\longrightarrow}$};
	\node at(10,4){$F_N$};
	\foreach \x in{1,2,3}
	{\foreach \y in{1,2,3}
		\draw[-{stealth}](a\x)--(b\y);
	}
	\foreach \x in{1,2,3}
	{\foreach \y in{1,2,3}
		\draw[-{stealth}](c\x)--(d\y);
	}
	\foreach \x in{1,2,3}
	{\foreach \y in{1,2,3}
		\draw[-{stealth}](f\x)--(g\y);
	}
	\foreach \x in{1,2,3}
	\draw[-{stealth}](b\x)--(c\x);
	\foreach \x in{1,2,3}
	\node at(5,\x){$\cdots$};
	\foreach \x in{1,2,3}
	\draw[-{stealth}](e\x)--(f\x);
\end{tikzpicture}
\caption{N-layer Neural Network}
\label{fig:nn}
\end{figure}
Here, the parameters from the $i$-th layer to the  $(i+1)$-th layer are the weight matrix $W_i$ and bias $b_i$, and $\sigma$ is a non-linear activation function, such as sigmod or ReLU. We denote the neuron vector at $i$-th layer before activation as $G_i$, and the neurons after activation as $F_i$, i.e. $F_1=X$,
\begin{equation}
\label{eq:fun}
\begin{aligned}
G_{i+1}&  = W_i F_i + b_i;\\
F_{i+1}&  = \sigma(G_{i+1})
\end{aligned}
\end{equation}
for $i=1,\cdots, N-1$. We note that in general at the last layer, there is no non-linear activation function and $F_N=G_N$. For ease of notation, we can use an activation function $\sigma$ as identity.  The generic training model aims to solve the following minimization problem:
\begin{align}
\label{eq:minprob}
\min_\theta J(\theta;X,Y)&:= L(F_N,Y)
\end{align}
where $\theta$ denotes the collective parameter set  $\{W_i, b_i\}_{i=1}^{N-1}$ and $L$ is some loss function.

\section{Semi-Implicit back-propagation method}
In the following, we first present the classic back propagation (BP) algorithm for an easier introduction of  the proposed semi-implicit method.
\subsection{Back probation method}
The widely used BP   method \cite{Rumelhart1986Learning} is based on gradient descent algorithm:
\begin{equation}
\theta^{k+1} = \theta^{k} - \eta \frac{ \partial J(\theta^k;X,Y)}{\partial \theta}
\end{equation}
where $\eta>0$ is the stepsize. The main idea of BP algorithm  is to use an efficient  error propagation scheme on the hidden neurons for computing the partial derivatives of the network parameters at each layer. The so-called "error" signal $\delta_N\equiv\frac{\partial J}{\partial F_N}$ at the last level is propagated to the hidden neurons  $\delta_i\equiv\frac{ \partial J}{\partial F_i}$ using the chain rule. In fact for a square loss function, the gradient at the last layer $\delta_N=\frac{\partial L(F_N, y)}{\partial F_N}=F_N-y$ is indeed an error. The propagation from  $\delta_{i+1}$ to $\delta_i$ for $i=N-1,\cdots, 1$ is then  calculated as
\begin{equation}\label{eq:error}
\delta_i:\frac{\partial J}{\partial F_i}= \frac{\partial G_{i+1}}{\partial F_i} \frac{\partial F_{i+1}}{\partial G_{i+1}}\frac{\partial J}{\partial F_{i+1}}=W_i^T(\frac{\partial \sigma (G_{i+1})}{\partial G_{i+1}} \odot \delta_{i+1}).
\end{equation}
And the partial derivative to $W_i$ can be computed as
\begin{equation}\label{eq:bp}
\frac{\partial J}{\partial W_i}:= \frac{\partial G_{i+1}}{\partial W_i} \frac{\partial F_{i+1}}{\partial G_{i+1}}\frac{\partial J}{\partial F_{i+1}}=(\frac{\partial \sigma (G_{i+1})}{\partial G_{i+1}} \odot \delta_{i+1})F_i^T.
\end{equation}
At $k-$th iteration, after a forward update of the neurons $F_i^k$ by (\ref{eq:fun}) using the current parameters sets $\{W_i^k, b_i^k\}, i=1,\cdots, N-1$, we can compute the "error" signal at each neurons  sequentially from the $i+1$ level to $i$ level by (\ref{eq:error}) and the parameters $W_i^{k+1}$ is updated according to the  gradient  at the point $W_i^k$ computed by (\ref{eq:bp}).
%\subsection{Proximal mapping}
%Proximal mapping is used in implicit step to optimize the function. The proximal mapping of a function $f:\mathbb{R}^n \rightarrow \mathbb{R}$ is defined as:
%\begin{equation}
%\text{prox}_{\lambda f}(y) := \mathop{\arg\min}_{x \in \mathbb{R}} f(x) + \frac{\lambda}{2}\lVert x-y\rVert^2
%\end{equation}
%Let $y = x^k$ and proximal mapping infer the new point $x^{k+1}$ as an implicit gradient step. The optimal condition of (19) show the connection between $x^{k+1}$ and $x^k$:
%\begin{align}
%x^{k+1} & = \mathop{\arg\min}_{x \in \mathbb{R}} f(x) + \frac{\lambda}{2}\lVert x-x^k\rVert^2 \\
%& = x^k - \frac{1}{\lambda} \nabla f(x^{k+1})
%\end{align}
%This implicit gradient step is well known as the proximal point algorithm. In contrast to the explicit gradient step it maintains the stability and $\lambda$ can be choose widely. This implicit step also holds that:
%\begin{equation}
%f(x^{k+1}) \leq f(x^k) - \frac{\lambda}{2} \lVert x^{k+1} - x^k \rVert^2
%\end{equation}

\subsection{Semi-implicit updates}
Compared to the BP  method, we propose to update the hidden neurons and the parameters sets at each layer in an implicit way. At the iteration $k$, given the current estimate $\theta^k:\{W_i^k, b_i^k\}$, we first update the neuron  $F_i^k$ and $G_i^k$ in a feedforward fashion as BP method, by using (\ref{eq:fun}) for $i=1, \cdots, N-1$. For the backward stage, we start with updating neuron $F_N$ at the last layer using the gradient descent:
\begin{equation}
 \delta^{k}_N=\frac{\partial L(F_N^k,Y)}{\partial F_N}, \quad  F_N^{k+\frac{1}{2}}  =  F_N^{k} -\eta  \delta^{k}_N.
\end{equation}
For $i=N-1, \cdots, 1$, given $F_{i+1}^{k+\frac{1}{2}}$, the parameters $W_i, b_i$  are updated by  solving the following optimization problem (once)
\begin{equation}
\label{eq:wb}
 \left\{
\begin{aligned}
W_{i}^{k+1} & = \mathop{\arg\min}_{W_{i}} \lVert \sigma( W_{i}F_{i}^k + b_{i}^k) - F_{i+1}^{k+\frac{1}{2}} \rVert_F^2 + \frac{\lambda}{2}\lVert W_{i} - W_{i}^k \rVert_F^2 \\
b_{i}^{k+1} & = \mathop{\arg\min}_{b_{i}} \lVert \sigma(W_{i}^{k+1}F_{i}^k + b_{i}) -F_{i+1}^{k+\frac{1}{2}}\rVert_F^2 + \frac{\lambda}{2}\lVert b_{i} - b_{i}^k \rVert_F^2
\end{aligned}
\right.
\end{equation}
where $\lambda>0$ is a parameter that is corresponding to stepsize. This update of parameters is related to using an implicit gradient based on so-called proximal mapping. Taking $W_i$ as an example, the optimality in (\ref{eq:wb}) gives
$$W_i^{k+1}  = W_i^k - \frac{1}{\lambda} \nabla f(W_i^{k+1})$$
where $f(W_i)=\lVert \sigma( W_{i}F_{i}^k + b_{i}^k) - F_{i+1}^{k+\frac{1}{2}} \rVert_F^2 $. Compared to a direct gradient descent step, this update is unconditionally stable for any  stepsize $1/\lambda$.  We note that proximal mapping was previously proposed for training neural network as  ProxBP in  \cite{Frerix2017Proximal}. However the update of the parameter sets is different as ProxBP uses  $G_{i+1}$ for the data fitting at each layer. The two subproblems  at each layer can be  solved by a nonlinear conjugate gradient method.\\
After the update of $W_i^{k+1}$ and $b_i^{k+1}$, we need to update the hidden neuron $F_{i}$. As classical BP, we first consider the gradient at $F_i^k$ as
\begin{align}
\frac{\partial J}{\partial F_i^k} & = \frac{\partial F^k_{i+1}}{\partial F^k_{i}} \frac{\partial J}{\partial F^k_{i+1}}  = \frac{\partial G^k_{i+1}}{\partial F^k_{i}} \frac{\partial F^k_{i+1}}{\partial G^k_{i+1}} \frac{\partial J}{\partial F^k_{i+1}}.
\end{align}
It can be seen that the partial derivative $\frac{\partial G^k_{i+1}}{\partial F^k_{i}}:=W_i^k$. Different from BP and ProxBP, we use the newly updated $W_i^{k+1}$ instead of $W_i^k$ to compute the error:
\begin{equation}
\delta_i^{k}:= (W_i^{k+1})^T \cdot (\partial \sigma(G_{i+1}^k) \odot \delta^{k}_{i+1}), \quad  F^{k+\frac{1}{2}}_i  = F_i^k - \eta \delta_i^{k}
\end{equation}
By this formula, the difference can be propagated from the level $N$ to $1$. %
%\begin{equation}
%\frac{\partial J}{\partial F_{N-1}^k} = \frac{\partial F^k_N}{\partial F^k_{N-1}} \frac{\partial L(F_N,Y)}{\partial F^k_N}
%\end{equation}
%Our approach to update $F^k_{N-1}$ is:
%\begin{align}
%F_{N-1}^{k+\frac{1}{2}} & = F_{N-1}^k -\eta (W_{N-1}^{k+1})^T \delta^{k+1}_N \\
%& = F_{N-1}^k -\eta \delta^{k+1}_{N-1}
%\end{align}
%Now assume that we have updated $F_i^k \rightarrow F_i^{k+\frac{1}{2}}$, we still use implicit step to update $W_i$ and $b_i$:
%$$\left\{
%\begin{aligned}
%W_{i}^{k+1} & = \mathop{\arg\min}_{W_{i}} \lVert \sigma( W_{i}F_{i}^k + b_{i}^k) - F_{i+1}^{k+\frac{1}{2}} \rVert_F^2 + \frac{\lambda}{2}\lVert W_{i} - W_{i}^k \rVert_F^2 \\
%b_{i}^{k+1} & = \mathop{\arg\min}_{b_{i}} \lVert \sigma(W_{i}^{k+1}F_{i}^k + b_{i}) -F_{i+1}^{k+\frac{1}{2}}\rVert_F^2 + \frac{\lambda}{2}\lVert b_{i} - b_{i}^k \rVert_F^2
%\end{aligned}
%\right.$$
At the last level $i=1$, we only  need to update $W_1^{k+1}$ and $b_1^{k+1}$ as $F_1=X$.
 %Same to the last layer, we use $W_i^{k+1}$ to update the hidden neural. The gradient of $F^k_i$ is:
%\begin{align}
%\frac{\partial J(\theta;X,Y)}{\partial F_i^k} & = \frac{\partial F^k_{i+1}}{\partial F^k_{i}} \frac{\partial J(\theta;X,Y)}{\partial F^k_{i+1}} \\
%& = \frac{\partial G^k_{i+1}}{\partial F^k_{i}} \frac{\partial F^k_{i+1}}{\partial G^k_{i+1}} \frac{\partial J(\theta;X,Y)}{\partial F^k_{i+1}}
%\end{align}
The overall semi-implicit back propagation method is summarized in Algorithm 1.
\begin{algorithm}
\label{alg:siBP}
\caption{Semi-implicit back propagation}
\begin{algorithmic}
\STATE \textbf{Input:} Current parameters $\theta^k = \{W^k_i,b^k_i\}$
\STATE // \textbf{\textit{Forward pass}}
\STATE $F_1 = X$
\FOR{$i = 1$ \textbf{to} $N-1$}
\STATE{$G_{i+1}^k = W_i^kF_i^k + b_i$}
\STATE{$F_{i+1}^k = \sigma(G_{i+1}^k)$}
\ENDFOR
%\STATE{$F_N^k =  W_{N-1}^kF_{N-1}^k + b_{N-1}$}
\STATE{Update on $F^k_N\xrightarrow{\delta^k_N} F^{k+\frac{1}{2}}_N$}
\FOR{$i = N-1$ \textbf{to} $2$}
\STATE{Implicit update on $(W^k_i,b^k_i)\xrightarrow{F_{i+1}^{k+\frac{1}{2}}} (W^{k+1}_i,b^{k+1}_i)$}
\STATE{Error propagation $\delta^k_{i+1}\xrightarrow{W^{k+1}_i}\delta^k_i$}
\STATE{Explicit update on $F^k_i \xrightarrow{\delta^k_i} F^{k+\frac{1}{2}}_i$}
\ENDFOR
\STATE{Implicit update on $(W^k_1,b^k_1)\xrightarrow{F_{2}^{k+\frac{1}{2}}} (W^{k+1}_1,b^{k+1}_1)$}
\STATE \textbf{Output:} New parameters $\theta^{k+1} = \{W^{k+1}_i,b^{k+1}_i\}$
\end{algorithmic}
\end{algorithm}
For large scale training problem, the back propagation is used in the form of stochastic gradient descent (SGD) using a small set of samples. The proposed semi-implicit method can be easily extended to stochastic version by replacing $(X,Y)$ by a batch set $(X_{\text{mini}}, Y_{\text{mini}})$ at each iteration in  Algorithm 1.

\subsection{Fixed points of Semi-implicit method}
%The implicit update of bias is similar to the implicit update of weight matrix, so we only consider the implicit gradient update of weight matrix:
%\begin{equation}
%W_{i}^{k+1} = \mathop{\arg\min}_{W_{i}} \lVert  \sigma( W_{i}F_{i}^k + b_{i}^k)-F_{i+1}^{k+\frac{1}{2}} \rVert_F^2 + \frac{\lambda}{2}\lVert W_{i} - W_{i}^k \rVert_F^2
%\end{equation}

%This sub problem can be solved by non-linear conjugate gradient methods and the optimality condition leads to
%\begin{equation}
%\lambda(W_i^{k+1}-W_i^k) = [\partial\sigma(W_i^{k+1}F^k_i+b_{i}^k)\odot(F_{i+1}^{k+\frac{1}{2}}-\sigma(W_i^{k+1}F^k_i+b_{i}^k))](F_i^k)^T
%\end{equation}
The follow proposition indicates that any fixed point of the iteration is a stationary point of the objective energy function.
\begin{proposition}
Assume that $L$ and the activation functions $\sigma$ are continuously differentiable. If $\theta^k \stackrel{k\rightarrow \infty}{\longrightarrow}\theta^*$ for $\theta^* = \{W^*_i,b^*_i\}$, then $\theta^*$ is a stationary point of the energy function $J(\theta;X,Y)$.
\end{proposition}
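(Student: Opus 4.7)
The idea is to pass to the limit in every subproblem of Algorithm~1 and observe that the first-order optimality conditions for the $W_i$ and $b_i$ updates collapse, at a fixed point, exactly to the classical back-propagation formulas (\ref{eq:error})--(\ref{eq:bp}). The only ingredient besides continuity is that at a fixed point the proximal penalty $\lambda(W_i^{k+1}-W_i^k)$ (and its $b_i$ analogue) tends to zero.

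First I would set up limits for the auxiliary quantities. Since $\theta^k\to\theta^*$ and $X$ is fixed, continuity of $\sigma$ applied to the forward recursion (\ref{eq:fun}) yields $F_i^k\to F_i^*$ and $G_i^k\to G_i^*$, where $F_i^*,G_i^*$ are the forward-pass values at $\theta^*$. Continuous differentiability of $L$ then gives $\delta_N^k\to\delta_N^*:=\partial L(F_N^*,Y)/\partial F_N$, which by definition equals $\partial J/\partial F_N|_{\theta^*}$, and hence $F_N^{k+1/2}\to F_N^*-\eta\delta_N^*$.

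Next I would run a backward induction on $i$, simultaneously establishing (i) $\delta_i^k\to\delta_i^*=\partial J/\partial F_i|_{\theta^*}$ and (ii) $\partial J/\partial W_i|_{\theta^*}=\partial J/\partial b_i|_{\theta^*}=0$. At level $i$, assuming $\delta_{i+1}^*=\partial J/\partial F_{i+1}|_{\theta^*}$ and $F_{i+1}^{k+1/2}\to F_{i+1}^{*,1/2}:=F_{i+1}^*-\eta\delta_{i+1}^*$, the optimality condition for (\ref{eq:wb}) reads
\[
2\bigl[(\sigma(W_i^{k+1}F_i^k+b_i^k)-F_{i+1}^{k+1/2})\odot\sigma'(W_i^{k+1}F_i^k+b_i^k)\bigr](F_i^k)^T+\lambda(W_i^{k+1}-W_i^k)=0.
\]
Taking $k\to\infty$, the penalty term drops because $W_i^{k+1}-W_i^k\to 0$, and $\sigma(W_i^*F_i^*+b_i^*)=F_{i+1}^*$, so that $F_{i+1}^*-F_{i+1}^{*,1/2}=\eta\delta_{i+1}^*$. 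The surviving equation is, up to the factor $2\eta$, exactly formula (\ref{eq:bp}) for $\partial J/\partial W_i$ evaluated at $\theta^*$ with the true BP error $\partial J/\partial F_{i+1}|_{\theta^*}=\delta_{i+1}^*$. Hence $\partial J/\partial W_i|_{\theta^*}=0$, and the identical argument on the $b_i$ subproblem gives $\partial J/\partial b_i|_{\theta^*}=0$.

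To close the induction, I would substitute the newly-established $W_i^{k+1}\to W_i^*$ into the error-propagation rule $\delta_i^k=(W_i^{k+1})^T(\sigma'(G_{i+1}^k)\odot\delta_{i+1}^k)$; in the limit this is precisely the BP recursion (\ref{eq:error}), so $\delta_i^*=\partial J/\partial F_i|_{\theta^*}$, advancing the induction to level $i-1$. Iterating down to $i=1$ yields stationarity at $\theta^*$. The main obstacle, such as it is, is purely bookkeeping: verifying that the implicit choice of $W_i^{k+1}$ (rather than $W_i^k$) inside the error rule causes no trouble. It does not, precisely because we are given $W_i^{k+1}\to W_i^*$, so continuity handles the passage to the limit without needing any further estimate on the rate of convergence of $\theta^k$.
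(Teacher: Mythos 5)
Your proposal is correct and follows essentially the same route as the paper's proof: pass to the limit in the forward pass, then run a backward induction in which the first-order optimality condition of \eqref{eq:wb} loses its proximal term (since $W_i^{k+1}-W_i^k\to 0$) and collapses to the BP formula \eqref{eq:bp}, giving $\partial J/\partial W_i^*=0$, while the error-propagation rule with $W_i^{k+1}\to W_i^*$ advances the induction hypothesis $\delta_i^*=\partial J/\partial F_i^*$ (equivalently, the paper's $\lim_k F_i^{k+1/2}=F_i^*-\eta\,\partial J/\partial F_i^*$). Your explicit treatment of the $b_i$ subproblem is a small completeness bonus over the paper, which only writes out the $W_i$ case.
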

\begin{proof}
Due to the forward update, $\{W^k_i,b^k_i \}\stackrel{k\rightarrow \infty}{\longrightarrow}\{W^*_i,b^*_i\} $ infers that $\{F^k_i,G^k_i\} \stackrel{k\rightarrow \infty}{\longrightarrow} \{F^*_i,G^*_i\}$ where $G^*_i=W_i^*F^*_{i-1}+b_i^*$ and $G^*_i=\sigma(F_i^*)$ for $i=1,\cdots, N$. At the last layer, the neuron $F_N$ is updated with gradient descent:
\begin{equation}
F_N^{k+\frac{1}{2}} =  F_N^{k} - \eta\frac{\partial L(F_N,Y)}{\partial F^k_N}
\end{equation}
%For the update of $F_N$, it is easy to see that $\frac{\partial L(F_N,Y)}{\partial F^k_N}\rightarrow 0$.
%Insert (33) into proximal update of $W^{k+1}_{N-1}$:
%\begin{equation}
%W_{N-1}^{k+1} = \mathop{\arg\min}_{W_{N-1}} \lVert W_{N-1}F_{N-1}^k + b_{N-1}^k - F_N^{k+\frac{1}{2}} \rVert_F^2 + \frac{\lambda}{2}\lVert W_{N-1} - W_{N-1}^k \rVert_F^2
%\end{equation}
%Combined with the optimal condition of $W_{N-1}^{k+1}$:
%\begin{align}
%\frac{\lambda}{2}( W_{N-1}^{k+1} - W_{N-1}^k) & = (F_N^{k+\frac{1}{2}} - W_{N-1}^{k+1}F_{N-1}^k - b_{N-1}^k)(F_{N-1}^k)^T \\
%& = ( F_N^{k} - \eta\frac{\partial L(F_N,Y)}{\partial F^k_N} - W_{N-1}^{k+1}F_{N-1}^k - b_{N-1}^k)(F_{N-1}^k)^T
%\end{align}
%Let $k\rightarrow \infty$, we obtain that:
%\begin{align}
%0 & =\frac{\partial L(F_N,Y)}{\partial F^*_N} (F^*_{N-1})^T  \\
%& = \frac{\partial F^*_N}{\partial W^*_{N-1}} \frac{\partial L(F_N,Y)}{\partial F^*_N}\\
%& = \frac{\partial J(\theta;X,Y)}{\partial W^*_{N-1}}
%\end{align}
%Use the same way we can include that the gradient of $J(\theta;X,Y)$ on $\{W^*_{N-1},b^*_{N-1}\}$ is $0$.
Take a limit, we have
\begin{align}
\lim_{k\rightarrow\infty} F_{N}^{k+\frac{1}{2}} &  = \lim_{k\rightarrow\infty} F_{N}^k -\eta  \frac{\partial L(F_N,Y)}{\partial F^k_N} \\
& = F_{N}^* - \eta \frac{\partial J}{\partial F^*_{N}}
\end{align}

Now we show
\begin{equation}
\lim_{k\rightarrow\infty} F_{i}^{k+\frac{1}{2}} = F_{i}^* - \eta \frac{\partial J}{\partial F^*_{i}}; \quad  \frac{\partial J}{\partial W^*_i}=0
\end{equation}
 for $i=N,\cdots, 2$ using mathematical induction.
 The first equation is shown for $i=N$. By the optimality of \eqref{eq:wb}, we have
%\begin{equation}
%W_{i}^{k+1}  = \mathop{\arg\min}_{W_{i}} \lVert \sigma( W_{i}F_{i}^k + b_{i}^k) -F_{i+1}^{k+\frac{1}{2}} \rVert_F^2 + \frac{\lambda}{2}\lVert W_{i} - W_{i}^k \rVert_F^2
%\end{equation}
%Combine with the optimal condition, it shows that
\begin{equation}
\frac{\lambda}{2}(W_i^{k+1}-W_i^k) = [\partial\sigma(W_i^{k+1}F^k_i+b_{i}^k)\odot(F_{i+1}^{k+\frac{1}{2}}-\sigma(W_i^{k+1}F^k_i+b_{i}^k))](F_i^k)^T
\end{equation}
Let $k\rightarrow \infty$, we obtain
\begin{equation}
\begin{aligned}
0 & = [\partial\sigma(W_i^*F^*_i+b_{i}^*)\odot( F_{i+1}^* - \eta \frac{\partial J}{\partial F^*_{i+1}}-\sigma(W_i^*F^*_i+b_{i}^*))](F_i^*)^T \\
& = [\partial\sigma(G^*_{i+1})\odot( - \eta \frac{\partial J}{\partial F^*_{i+1}})](F_i^*)^T \\
& = -\eta [\partial\sigma(G^*_{i+1})\odot \frac{\partial J}{\partial F^*_{i+1}}](F_i^*)^T \\
& = -\eta  \frac{\partial G^*_{i+1}}{\partial W^*_i} \frac{\partial F^*_{i+1}}{\partial G^*_{i+1}}  \frac{\partial J}{\partial F^*_{i+1}}\\
& = -\eta \frac{\partial J}{\partial W^*_i}
\end{aligned}
\end{equation}
It is easy to see that the limit of $F^{k+\frac{1}{2}}_i$ for $i=N-1, \cdots, 1$ is:
\begin{equation}
\begin{aligned}
\lim_{k\rightarrow\infty} F_i^{k+\frac{1}{2}} &  = \lim_{k\rightarrow\infty} F_i^k - (W_i^{k+1})^T  [\partial \sigma(G_{i+1}^k) \odot (F^k_{i+1} - F^{k+\frac{1}{2}}_{i+1})] \\
& = F^*_i - \eta(W^*_i)^T[\partial \sigma(G^*_{i+1}) \odot \frac{\partial J}{\partial F^*_{i+1}}] \\
& = F^*_i - \eta \frac{\partial G^*_{i+1}}{\partial F^*_i} \frac{\partial F^*_{i+1}}{\partial G^*_{i+1}}  \frac{\partial J}{\partial F^*_{i+1}} \\
& = F^*_i-\eta \frac{\partial J}{\partial F^*_i}
\end{aligned}
\end{equation}

With mathematical induction we obtain that $\frac{\partial J}{\partial \theta^*} = \mathbf{0}$.
\end{proof}

\section{Numerical experiments}

In this section, we will compare the performance of BP, ProxBP \cite{Frerix2017Proximal} and the proposed semi-implicit BP using MNIST and CIFAT-10 datasets. All the experiments are performed  on MATLAB and Python with NVIDIA GeForce GTX 1080Ti and the  same network setting and initializations are used for a fair comparison.  We use softmax cross-entropy for the loss function $L$ and ReLU for the activation function, as usually chosen in classification problems. For the linear CG used in ProxBP and nonlinear CG in semi-implicit BP, the iterations number is set as $5$. Finally the weights and bias are initialized by normal distribution with average $0$ and standard deviation $0.01$.

\subsection{Performance on MNIST contrast to SGD and ProxBP}

We start from a ordinary experiment by MNIST, the training set contains $55000$ samples and the rest are included in the validation set. In this part we train a $784 \times 500 \times 10$ neural network which can usually reach $99\%$ training accuracy by a few epochs. The training process are performed 5 times, and Table \ref{tab:acc} shows the average training and validation accuracy achieved by SGD, Semi-implicit BP and ProxBP with different learning rates $\eta$ (for SGD) and $1/\lambda$ for semi-implicit BP and ProxBP on MNIST.  It can be seen that after $2$ epochs, semi-implicit BP method already achieves high accuracy as high as $99\%$ and the performance is very stable with respect to different stepsize choices, while SGD fails for some choices of stepsize $\eta$. For ProxBP, we present the results with $\eta=1$ as the  best performance is achieved with this set of parameters. The highest accuracies are marked in bold in each column, and we can see that  semi-implicit BP achieves the highest training and test accuracy than BP and ProxBP.

\begin{table}[htbp]
\center
\begin{tabular}{|c|l|l|l|l|l|}
\hline
MNIST$(784 \times 500 \times 10)$      & \multicolumn{5}{c|}{Training/validation accuracy} \\ \hline
learning rates   & 100    & 10    & 1    & 0.1   & 0.01   \\ \hline
\multirow{2}{3.5cm}{SGD, $\eta$}           & 0.0985 &0.1123 &0.9835& 0.9487& 0.8885 \\
            & 0.1002 &0.1126 &0.9760& 0.9492&  0.8988 \\ \hline
\multirow{2}{3.5cm}{ProxBP ($\lambda=1, \eta$)} & 0.9239 & 0.9349 & 0.9390 & 0.9032 & 0.8415 \\
 & 0.9344 & 0.9374 & 0.9494 & 0.9244 & 0.8832\\ \hline
\multirow{2}{3.5cm}{ProxBP ($\eta=1, 1/\lambda$)}  & 0.9420 & 0.9444 & 0.9383 & 0.9171 & 0.8743 \\
 & 0.9486 & 0.9554 & 0.9494 & 0.9344 & 0.9064 \\ \hline
\multirow{2}{3.5cm}{Semi-implicit ($\lambda=1, \eta$)}  &\textbf{0.9780}  & \textbf{0.9801} &\textbf{0.9904}&\textbf{0.9737}& 0.9104 \\
 &0.9710  & 0.9724 &\textbf{0.9800}& \textbf{0.9748}& 0.9338 \\ \hline
\multirow{2}{3.5cm}{Semi-implicit ($\eta=0.1, 1/\lambda$)} &0.9765  &0.9752 &0.9735& 0.9598&\textbf{0.9206} \\
 & \textbf{0.9736} & \textbf{0.9778}&0.9738 & 0.9672& \textbf{0.9394}\\ \hline
\end{tabular}
\caption{Training and validation accuracy with different step sizes for $2$ epochs.}
\label{tab:acc}
\end{table}
\newpage
\subsection{Performance on CIFAR-10 contrast to ProxBP and Gradient-based methods}

%\begin{table}[ht]
%\center
%\begin{tabular}{|c|l|l|l|l|l|}
%\hline
%MNIST$(784 \times 500 \times 10)$      & \multicolumn{5}{c|}{Training/validation accuracy} \\ \hline
%learning rates   & 100    & 10    & 1    & 0.1   & 0.01   \\ \hline
%\multirow{2}{3cm}{SGD, $\eta$}           & 0.0985 &0.1123 &0.0980& 0.9487& 0.8885 \\
%            & 0.1002 &0.1126 &0.0924& 0.9492&  0.8988 \\ \hline
%\multirow{2}{3cm}{Semi-implicit ($\lambda=1, \eta$)}  &0.9780  & 0.9801 &0.9904&0.9737& 0.9104 \\
% &0.9710  & 0.9724 &0.9800&0.9748& 0.9338 \\ \hline
%\multirow{2}{3cm}{Semi-implicit ($\eta=0.1, 1/\lambda$)} &0.9765  &0.9752 &0.9735& 0.9598& 0.9206 \\
% & 0.9736 & 0.9778&0.9738 & 0.9672& 0.9394\\ \hline
%\end{tabular}
%\caption{Training and validation accuracy with different step sizes for $2$ epoch.}
%\label{tab:acc}
%\end{table}
%\begin{table}[ht]
%\center
%\begin{tabular}{|c|l|l|l|l|l|}
%\hline
%MNIST$(784 \times 500 \times 10)$  & \multicolumn{5}{c|}{Validation accuracy} \\ \hline
%$\eta$        & 100    & 10    & 1    & 0.1   & 0.01   \\ \hline
%SGD           & 0.1002 &0.1126 &0.0924& 0.9492&  0.8988 \\ \hline
%$\eta(\lambda=1)$        & 100    & 10    & 1    & 0.1   & 0.01   \\ \hline
%Semi-implicit &0.9710  & 0.9724 &0.9800&0.9748& 0.9338 \\ \hline
%$1/\lambda(\eta = 0.1)$   & 100    & 10    & 1    & 0.1   & 0.01   \\ \hline
%Semi-implicit & 0.9736 & 0.9778&0.9738 & 0.9672& 0.9394\\ \hline
%\end{tabular}
%\caption{Validation accuracy with different step size after 2 epoch}
%\label{tab:va}
%\end{table}

For CIFAR-10, a set of $45000$ samples is used as training set and the rest as validation set.
In Figure \ref{fig:cifar}, we show the performance of the three methods per epoch for CIFAR-10 with $3072 \times 2000 \times 500 \times 10$ neural network. We choose the step size  $\eta=0.1$ for semi-implicit BP and ProxBP, while a smaller one to guarantee Adam achieves a better performance. The evolution of training loss and training accuracy shows that the performance of Semi-Implicit BP method leads to the fastest convergence. The improvement on the validation accuracy also demonstrates that the proposed semi-implicit BP method also generalize well in a comparison to the other two methods.
%\begin{figure}[htbp!]
%\begin{center}
%\includegraphics[scale=0.4]{}
%\includegraphics[scale=0.4]{}
%\includegraphics[scale=0.4]{}
%\end{center}
%\caption{MNIST. Batch size 100}
%\label{fig:mnist}
%\end{figure}
\begin{figure}[htbp!]
\begin{center}
\includegraphics[scale=0.6]{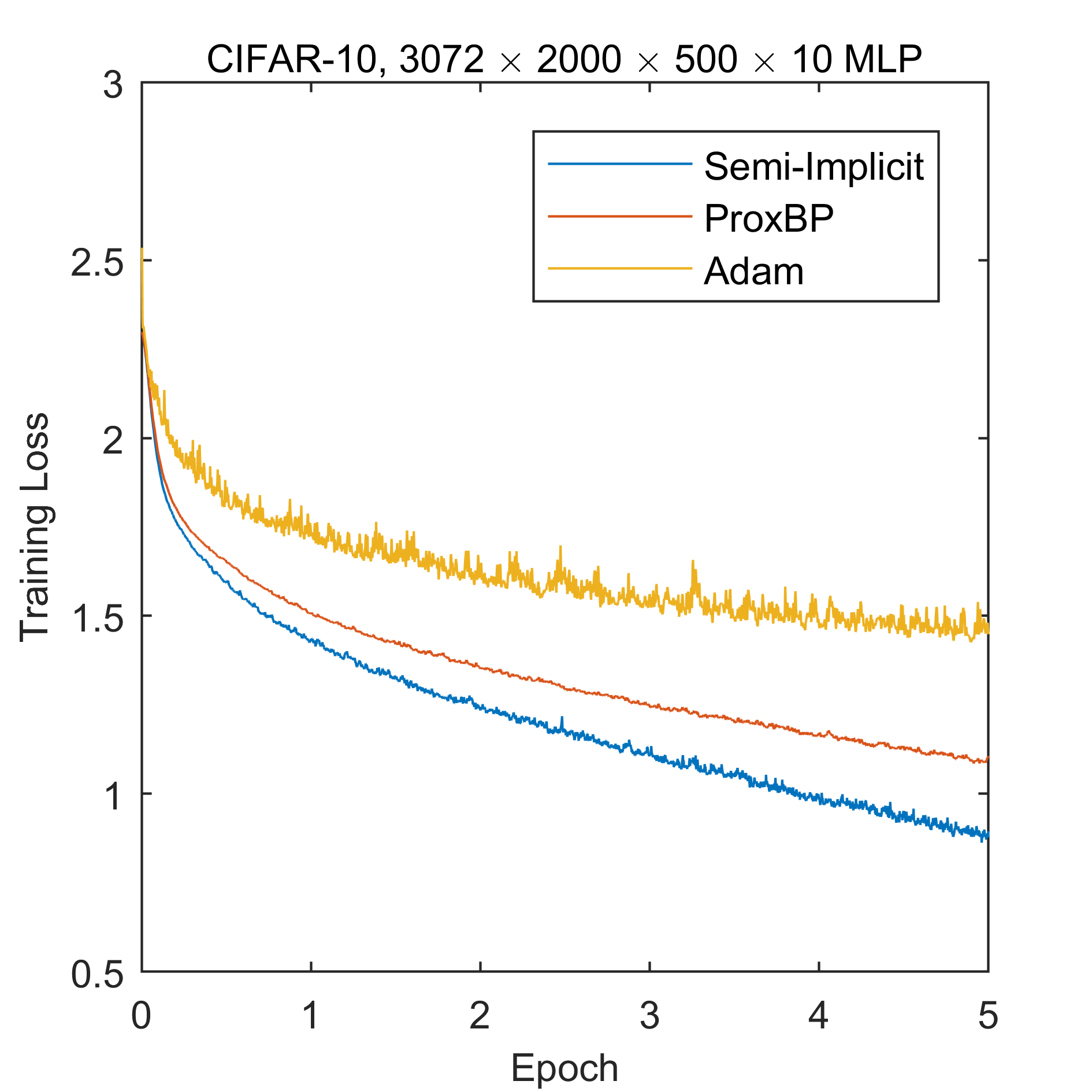}
\includegraphics[scale=0.6]{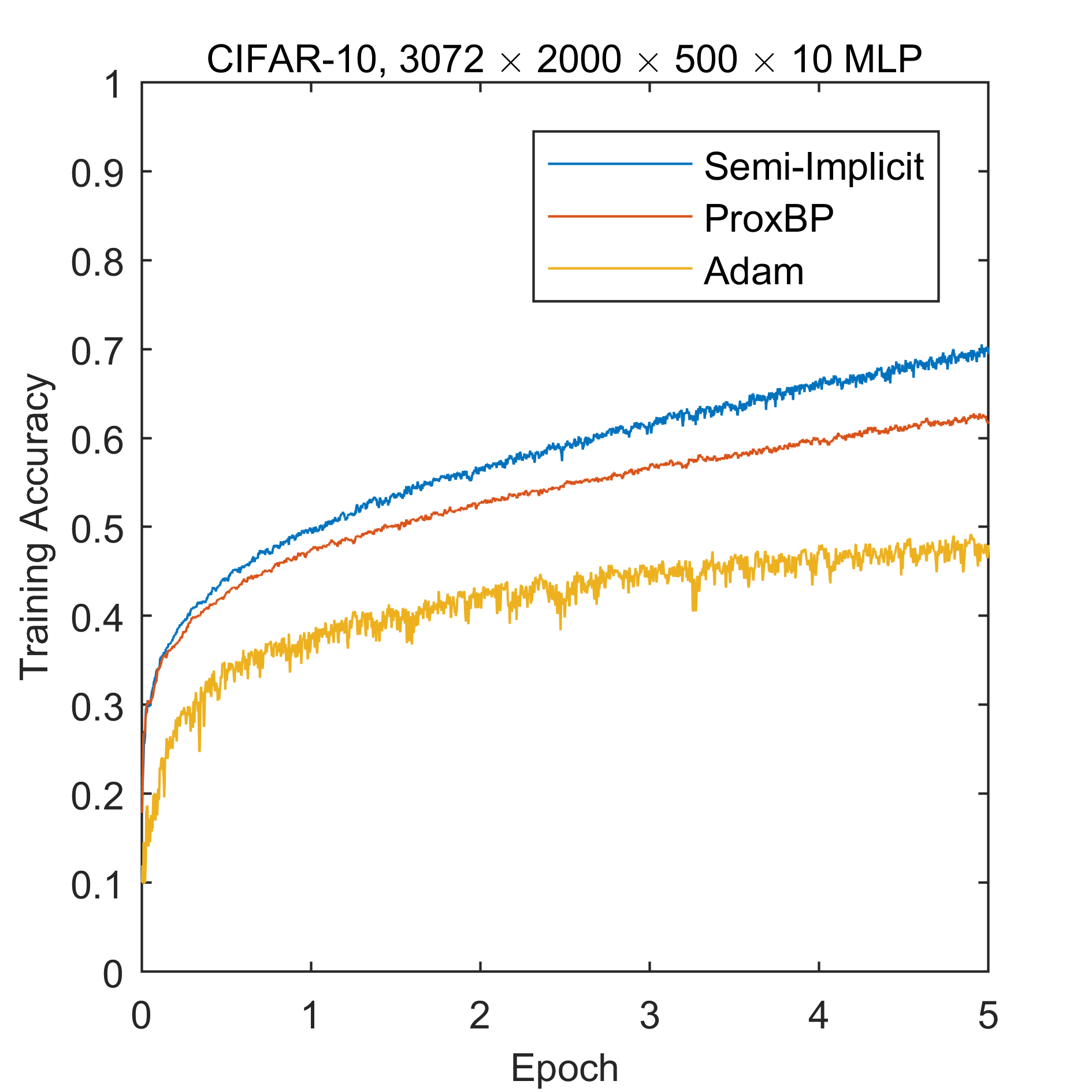}
\includegraphics[scale=0.6]{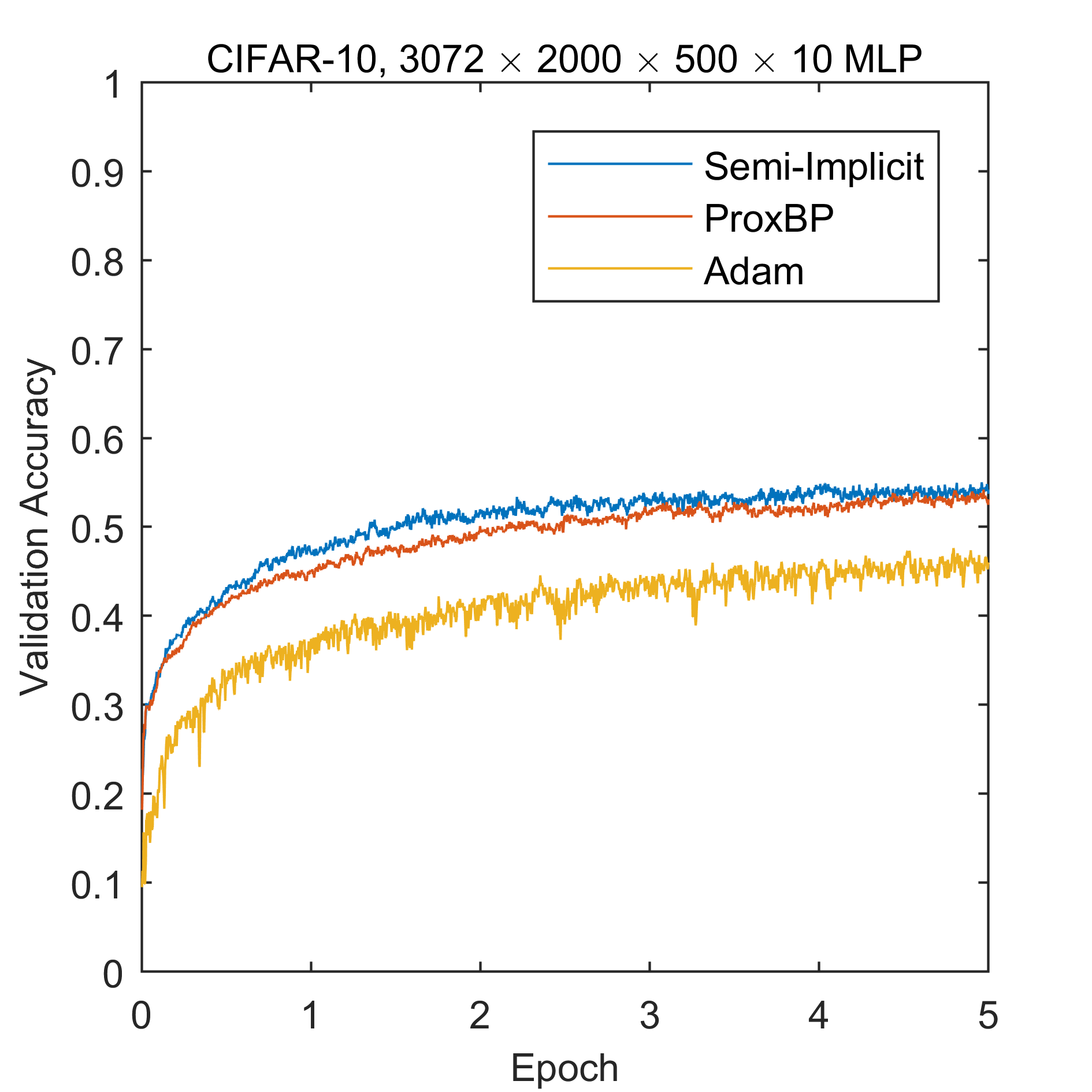}
\end{center}
\caption{CIFAR-10. Batch size 100}
\label{fig:cifar}
\end{figure}
To illustrate the difference between our method and gradient-based methods, we train a $3072 \times 4000 \times 1000 \times 4000 \times 10$ neural network on CIFAR-10 compared with SGD, Adam and RMSprop. For a fair comparison, we carefully choose parameters in SGD, Adam and RMSprop to gain better performance. In Figure \ref{fig:cifar2}, Semi-Impicit BP achieve both highest training accuracy and validation accuracy.
\begin{figure}[htbp!]
\begin{center}
\includegraphics[scale=0.6]{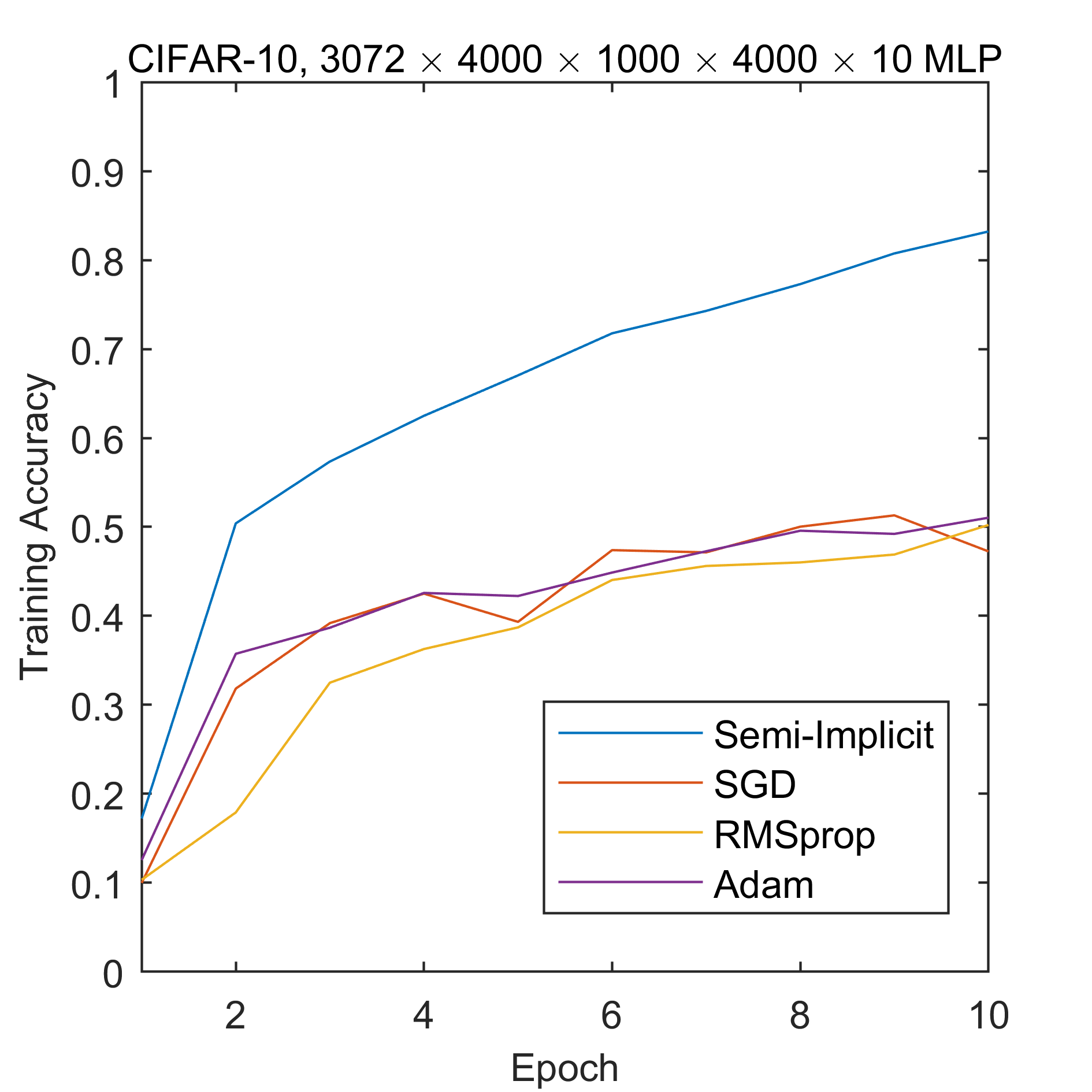}
\includegraphics[scale=0.6]{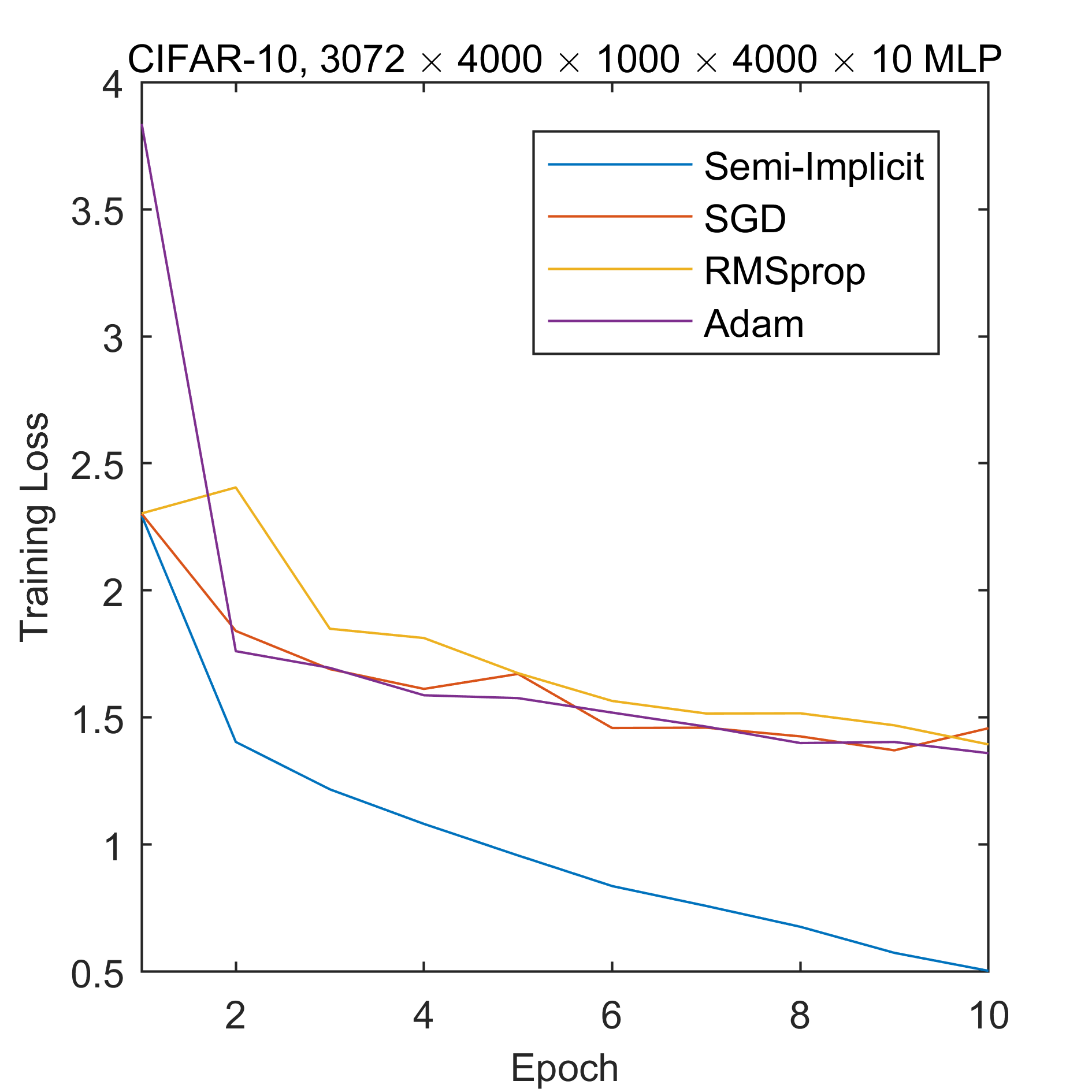}
\includegraphics[scale=0.6]{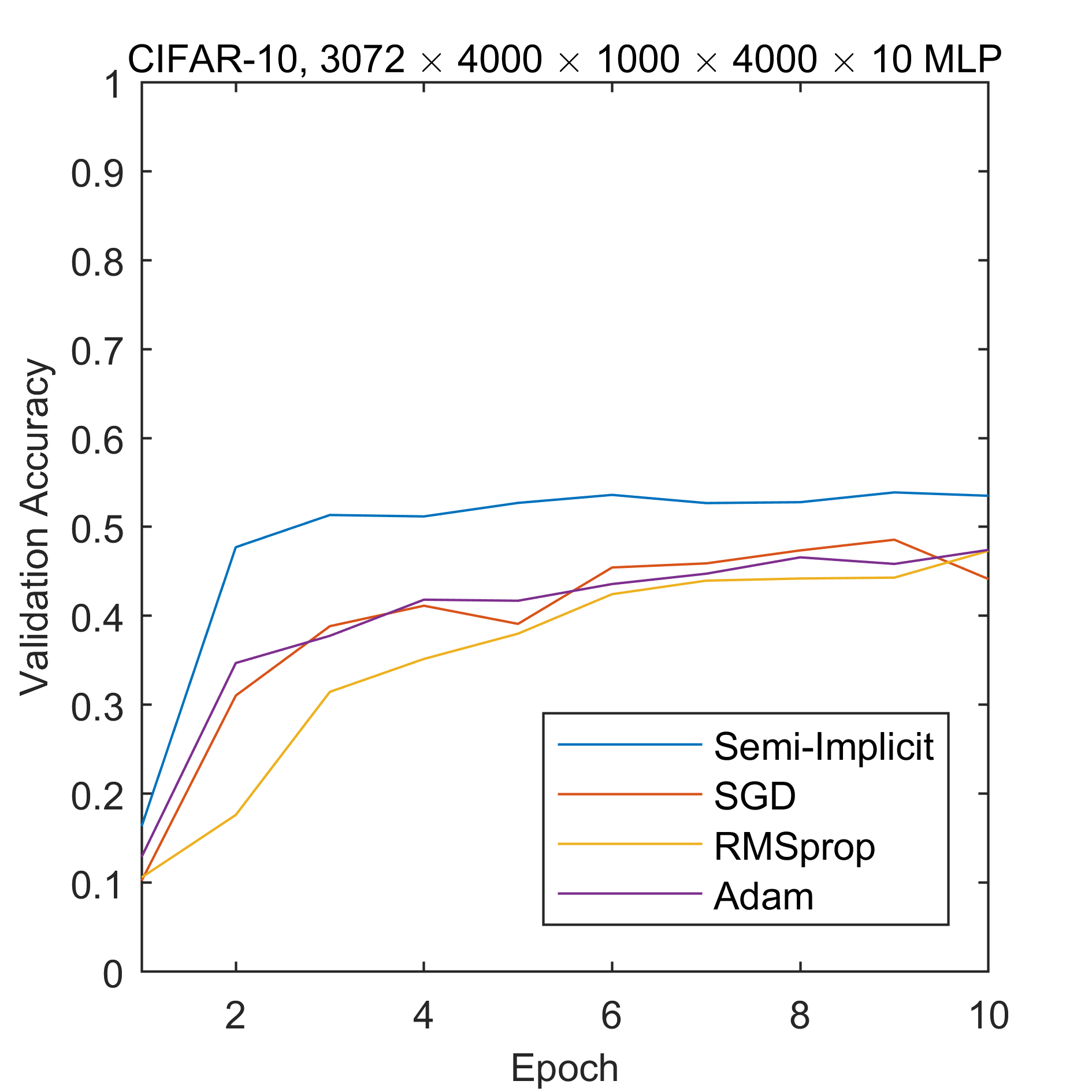}
\end{center}
\caption{CIFAR-10. Batch size 100}
\label{fig:cifar2}
\end{figure}
\subsection{Performance on a deep neural network}
Gradient-based methods are usually hard to propagate error back in a neural network of deep structure due to gradient  vanishing or explosion. In this part of experiment, we attempt to show the performance of Semi-Implicit BP for a deep neural network. Based on MNIST dataset, we train a $784 \times 600^{10} \times 10$ neural network, containing 10 hidden layers and each layer has 600 neurons. In Figure \ref{fig:deepmnist}, we show the training accuracy and validation accuracy for the three methods: Semi-Implicit BP, SGD and Adam with respect to running times. We can see that SGD and Adam can barely optimize this deep network while Semi-Implicit method shows an advantage.
\begin{figure}[htbp!]
\begin{center}
\includegraphics[scale=0.8]{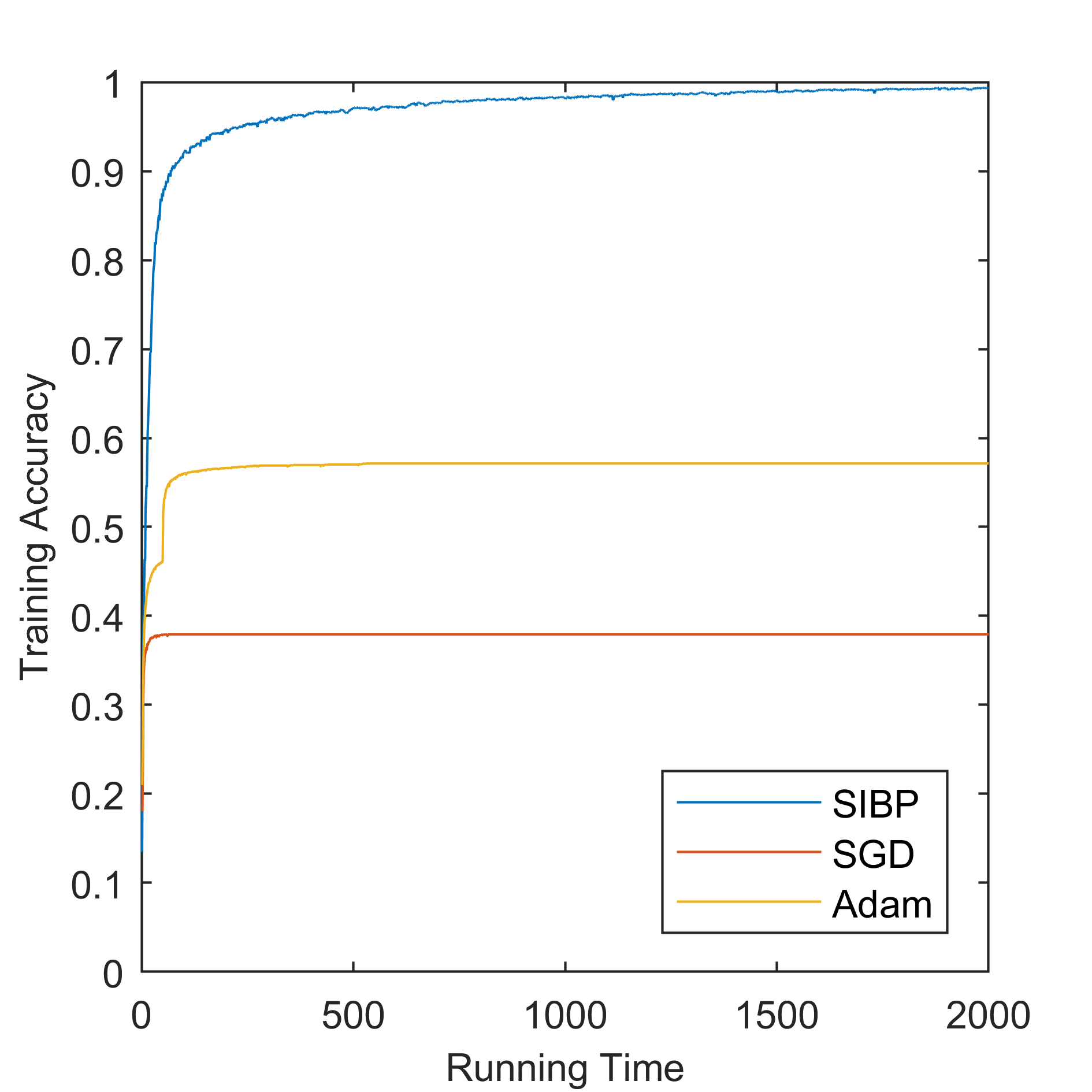}
\includegraphics[scale=0.8]{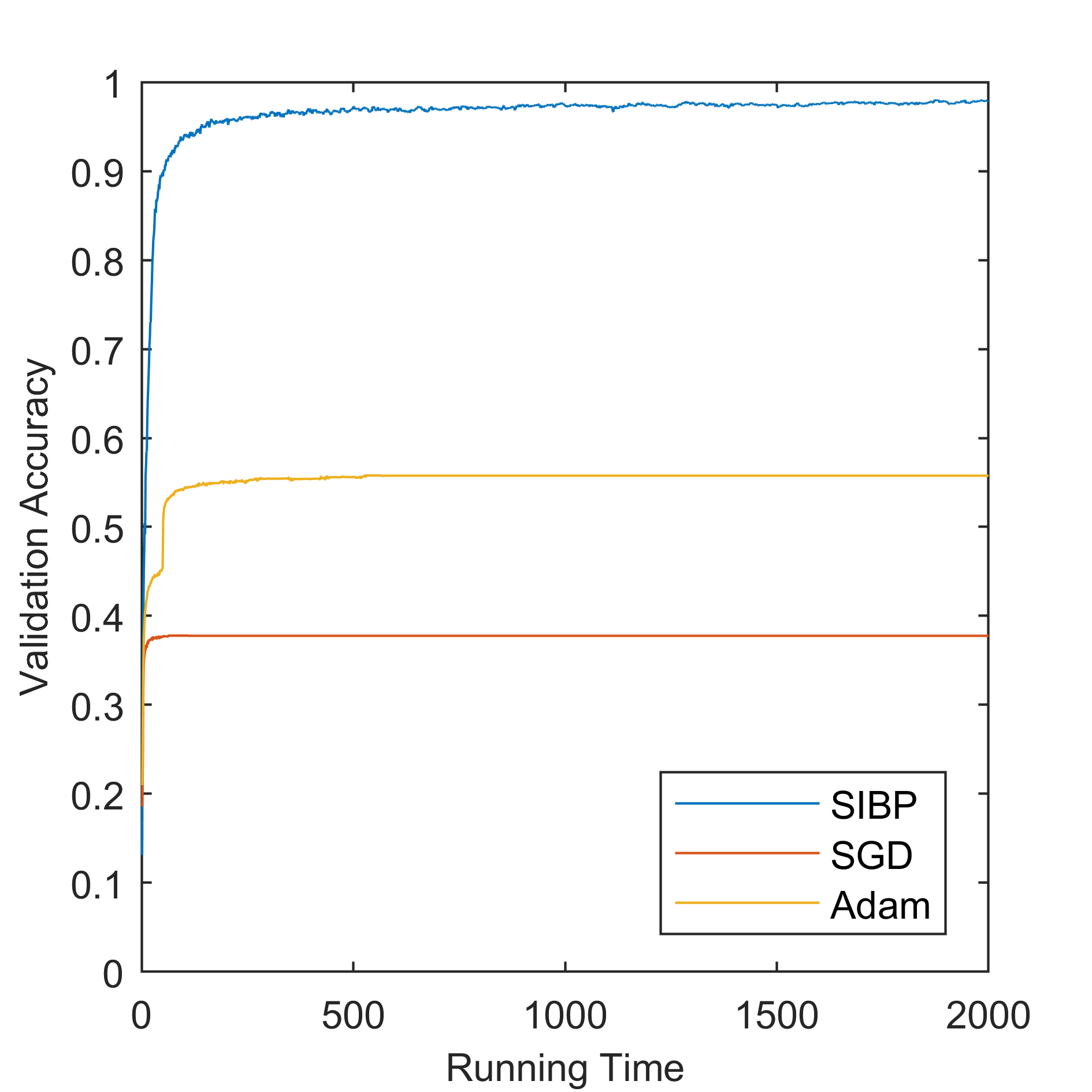}
\end{center}
\caption{MNIST, $784 \times 600^{10} \times 10$, Running Time(s)}
\label{fig:deepmnist}
\end{figure}
\subsection{Performance of different nonlinear CG steps}
This part we show how different steps of nonlinear CG affect the performance of Semi-Implicit BP with respect to running time. Though more steps in Nonlinear CG will lead to a more accurate solution of subproblem, few steps cost less time and may lead to a fast convergence. We train a $3072 \times 4000 \times 1000 \times 4000 \times 10$ neural network with different steps in Nonlinear CG. Figure \ref{fig:nlcg} shows that 5 steps nonlinear CG achives  a slightly  better performance in training accuracy.
\begin{figure}[htbp!]
\begin{center}
\includegraphics[scale=0.8]{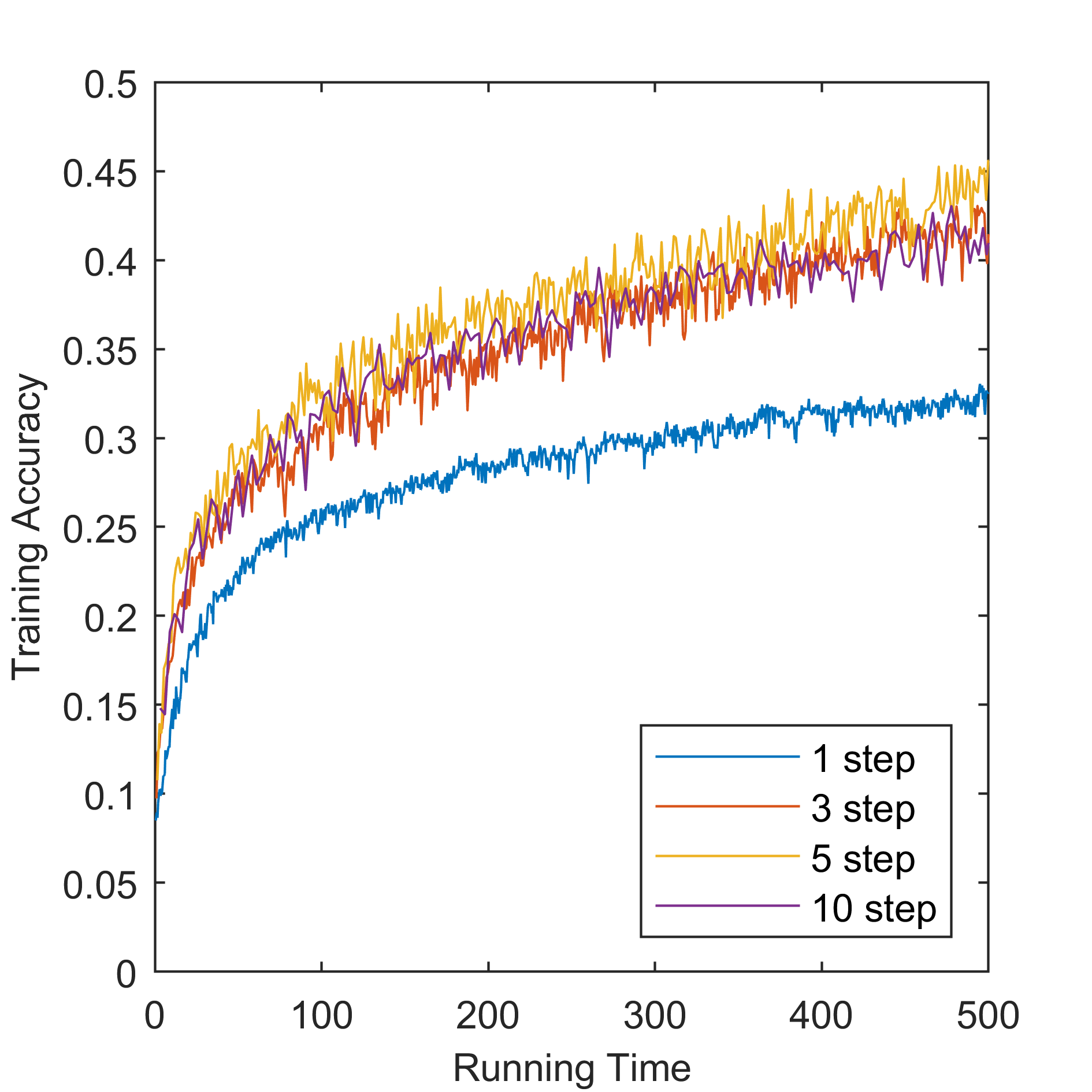}
\includegraphics[scale=0.8]{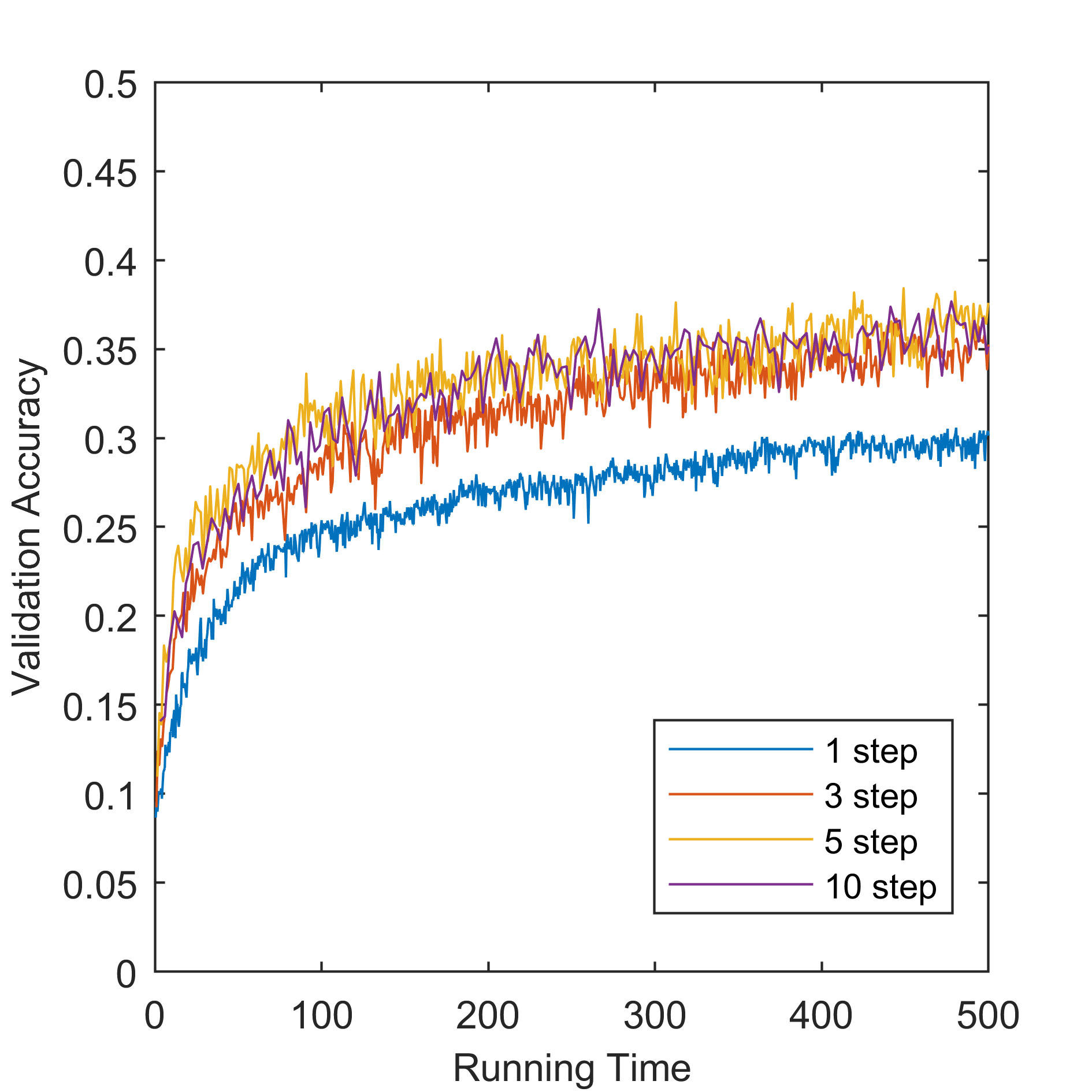}
\end{center}
\caption{CIFAR-10, $3072 \times 1000 \times 4000 \times 10$, Running Time(s)}
\label{fig:nlcg}
\end{figure}
\section{Conclusion}
We proposed a novel optimization  scheme in order to overcome the difficulties of small stepsize and vanishing gradient in  training neural networks. The computation of new scheme is in the spirit of error back propagation, with  an implicit updates on the parameters sets and semi-implicit updates on the hidden neurons.  The experiments on both MNIST and CIFAR-10 show that the proposed semi-implicit back propagation is promising compared to SGD and ProxBP. It is demonstrated in the numerical experiments that larger step sizes can be adopted without losing stability and performance. Semi-implicit back propagation also shows an advantage on optimizing deeper neural networks when SGD or Adam suffer from gradient vanishing. It can be also seen that the proposed scheme is flexible and some regularization can be easily integrated if needed.  The fixed points of the scheme are  shown to be  stationary points of the objective loss function and further rigorous theoretical convergence will be explored in an ongoing work.
\bibliography{msml2020}
\bibliographystyle{plain}

\end{document}